\hoffset2.0cm
\voffset1.5cm


\documentclass[smallextended,numbook]{svjour3_mod}

\usepackage{amsmath}
\usepackage{amssymb}

\usepackage{mathptmx}
\usepackage{helvet}
\usepackage{courier}



\begin{document}

\title{A Note on Approximate Inverse Iteration}

\author{Harry Yserentant}

\institute{
 Institut f\"ur Mathematik, Technische Universit\"at Berlin,
 10623 Berlin, Germany\\
\email{yserentant@math.tu-berlin.de}}

\date{November 13, 2016}

\titlerunning{A Note on Approximate Inverse Iteration}
 
\authorrunning{H. Yserentant}

\maketitle


\begin{abstract}
Different variants of approximate inverse iteration like the 
locally optimal block preconditioned conjugate gradient method 
became in recent years increasingly popular for the solution 
of the large matrix eigenvalue problems arising from the 
discretization of selfadjoint elliptic partial differential 
equations, in particular for the calculation of the minimum 
eigenvalue. We extend in this little note the classical 
convergence theory of D'yakonov and Orekhov [Math. Notes 27 
(1980)] to the case of operators with an essential spectrum 
on infinite dimensional Hilbert spaces and allow for arbitrary, 
sufficiently small perturbations of the solutions of the 
equation that links the iterates. The note complements the 
much more elaborate convergence theory of Neymeyr and Knyazev 
and Neymeyr for the matrix case; see [Knyazev and Neymeyr, 
SIAM J. Matrix Anal. Appl. 31 (2009)] and the references 
therein. 

%
%
%
%
%

\subclass{65N25 \and 65N15 \and 65N30}

%
%
%
%

\end{abstract}


\renewcommand {\thesection}{\arabic{section}}
\renewcommand {\theequation}{\arabic{section}.\arabic{equation}}

\renewcommand {\thelemma}{\arabic{section}.\arabic{lemma}}
\renewcommand {\thetheorem}{\arabic{section}.\arabic{theorem}}

\def \HS      {H}
\def \E       {E}


\section{Introduction}

Preconditioned inverse iteration evolved in recent 
years into a very popular method for the solution 
of the large matrix eigenvalue problems that arise 
from the discretization of linear selfadjoint 
elliptic partial differential equations, in particular 
in form of the locally optimal (block-)preconditioned 
conjugate gradient method \cite{Knyazev}. The analysis 
of such methods essentially started with the work of 
D'yakonov and Orekhov~\cite{Dyakonov-Orekhov} from 
the early 1980's. In a series of groundbreaking papers, 
Neymeyr \cite{Neymeyr_1,Neymeyr_2,Neymeyr_3} and
Knyazev and Neymeyr 
\cite{Knyazev-Neymeyr_1,Knyazev-Neymeyr_2} analyzed 
these methods in great detail and determined the 
exact convergence rate of their basic variant. The 
resulting estimates are short and elegant, but their 
proof is by no means simple and requires a long, 
complicated, and tedious analysis. Their 
generalization to the infinite dimensional case is 
anything but obvious and necessitates additional 
considerations~\cite{Rohwedder-Schneider-Zeiser}, 
in particular in the presence of an essential 
spectrum as in the example of the electronic 
Schr\"odinger equation, a basic equation of quantum 
physics and chemistry.
Therefore we adapt in this little note the original, 
comparatively simple and short proof of D'yakonov 
and Orekhov \cite{Dyakonov-Orekhov} to this general
situation and derive, in the language of quantum 
mechanics, a variant for the calculation of the 
ground state energy, the minimum eigenvalue, and 
an associated eigenfunction. The price to be paid
is that the initial approximation of this 
eigenfunction, from which the iterative process 
starts, must already possess a Rayleigh quotient 
below the rest of the spectrum and that the derived 
error bounds are surely not best possible. They show, 
however, qualitatively the right behavior, that is, 
they depend like the best possible bounds for the 
matrix case only on the minimum eigenvalue and
its distance to the rest of the spectrum and on a 
constant that controls the accuracy of the 
approximate solutions of the equation that links 
the iterates. Our presentation is completely based 
on the weak form of the eigenvalue problem and 
refers to it only via the assigned bilinear forms.


\section{Approximate inverse iteration in a general setting}

\setcounter{equation}{0}

Let $\HS$ be a Hilbert space that is equipped with 
the inner product $a(u,v)$ inducing the energy norm 
$\|u\|$, under which it is complete, and a further 
inner product $(u,v)$ that induces the weaker norm 
$\|u\|_0$. Let the infimum of the Rayleigh quotient 
\begin{equation}    \label{eq2.1}
\lambda(u)=\frac{a(u,u)}{(u,u)}, \quad 
\text{$u\neq 0$ in $\HS$},
\end{equation}
be an isolated eigenvalue $\lambda_1>0$ of finite 
multiplicity, which means in particular that the norm
$\|u\|_0$ of an element $u\in\HS$ can be estimated 
by its energy norm $\|u\|$. Let $\E_1$ be the 
corresponding eigenspace, the finite dimensional 
space of all $u\in\HS$ for which 
\begin{equation}    \label{eq2.2}
a(u,\chi)=\lambda_1(u,\chi), \quad \chi\in\HS,
\end{equation}
or equivalently $\lambda(u)=\lambda_1$ 
holds. Our primary aim is the calculation of this
eigenvalue, in quantum mechanics the ground state 
energy of the system under consideration, and to
a lesser degree also of an eigenvector for this 
eigenvalue.

As $a(u,\chi_1)=\lambda_1(u,\chi_1)$ for all 
$\chi_1\in\E_1$ and as $\lambda_1\neq 0$, the 
orthogonal complement
\begin{equation}    \label{eq2.3}
\E_1^\bot=
\{u\in\HS\,|\,\text{$(u,\chi_1)=0$ for all $\chi_1\in\E_1$}\}
\end{equation}
of $\E_1$ with respect to the inner product $(u,v)$ 
is at the same time the $a$-orthogonal complement
of this eigenspace. Let $\lambda_2$ be the infimum of 
the Rayleigh quotient on $\E_1^\bot$, itself a point 
in the spectrum. Since $\lambda_1$ is an isolated 
eigenvalue, $\lambda_2>\lambda_1$. In most cases,  
$\lambda_2$ will also be an isolated eigenvalue,  
even in the presence of an essential spectrum as
in the example of the electronic Schr\"odinger 
equation, but neither this nor any other additional 
assumption on the structure of the spectrum will 
enter into our argumentation.

Given an element $u\in\HS$~with norm $\|u\|_0=1$ and 
Rayleigh quotient $\lambda(u)<\lambda_2$, in inverse 
iteration in its original, exact version at first 
the solution $w\in\HS$ of the equation
\begin{equation}    \label{eq2.4}
a(w,\chi)=a(u,\chi)-\lambda(u)(u,\chi), \quad \chi\in\HS,
\end{equation}
is determined, which exists by the Lax-Milgram  
or the Riesz representation theorem and is unique. 
The current $u$ is then replaced by $u-w$. Since 
$a(u,w)=0$,
\begin{equation}    \label{eq2.5}
\|u-w\|^2=\|u\|^2+\|w\|^2.
\end{equation} 
The new element $u-w$ is thus different from zero so 
that $\lambda(u-w)$ is well defined and the process 
can be repeated with the normed version of $u-w$. 
The in this way iteratively generated sequence of 
Rayleigh quotients decreases then monotonously to 
the eigenvalue $\lambda_1$ and the iterates $u$ 
converge to an eigenvector for this eigenvalue.

This process can hardly be realized when $\HS$ is 
infinite dimensional or of high finite dimension. 
In the modification considered in this note, the 
solution $w$ of equation (\ref{eq2.4}) is 
therefore replaced by an approximation $v$ for 
which an error estimate
\begin{equation}    \label{eq2.6}
\|v-w\|\leq\eta\|w\|
\end{equation}
holds, where $\eta<1$ is a fixed constant that 
controls the accuracy. By (\ref{eq2.5}) and 
(\ref{eq2.6}) 
\begin{equation}    \label{eq2.7}
\|u-v\|\geq(1-\eta)\|u-w\|,
\end{equation}
so that also $u-v\neq 0$ and the whole process 
can proceed with the new iterate
\begin{equation}    \label{eq2.8}
u'=\frac{u-v\,}{\|u-v\|_0}.
\end{equation}
We do not make any assumption on the origin of 
$v$. It can, for example, be the element of best 
approximation of $w$ in an appropriately chosen 
finite dimensional subspace of~$\HS$, an 
iteratively calculated approximation of this 
element, or anything else. We will analyze in 
the next section the convergence properties of 
this general form of approximate inverse 
iteration along the lines given by D'yakonov 
and Orekhov.


\section{Convergence and error estimates}

\setcounter{equation}{0}

Starting point of our analysis is as described an 
element $u\in\HS$ of norm $\|u\|_0=1$ with Rayleigh
quotient $\lambda(u)<\lambda_2$. We assume that $w$ 
is the unique solution of equation (\ref{eq2.4}), 
$v$~an approximation of this $w$ satisfying 
(\ref{eq2.6}), and $u'$ the normed version 
(\ref{eq2.8}) of $u-v$. For abbreviation, we set 
$\lambda=\lambda(u)$ and $\lambda'=\lambda(u')$. 
Moreover, we need the $a$-orthogonal projection 
$P_1$ of the Hilbert space $\HS$ onto the eigenspace 
$\E_1$ for the eigenvalue $\lambda_1$, which is at 
the same time the orthogonal projection of $\HS$ 
to $\E_1$ with respect to the other inner product 
$(u,v)$, and the with respect to both inner products 
orthogonal projection $Q=I-P_1$ of $\HS$ onto the 
orthogonal complement (\ref{eq2.3}) of $\E_1$. 

\begin{lemma}       \label{lm3.1}
The energy norm of $w$ can be estimated from below 
as follows:
\begin{equation}    \label{eq3.1}
\|w\|^2\,\geq\,
\bigg(\frac{\lambda_2-\lambda}{\lambda_2}\bigg)^2
(\lambda-\lambda_1).
\end{equation}
\end{lemma}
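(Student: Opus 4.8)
The plan is to decompose the starting vector as $u=u_1+u_\perp$ with $u_1=P_1u\in\E_1$ and $u_\perp=Qu\in\E_1^\bot$, and to extract the estimate by testing the defining equation (\ref{eq2.4}) of $w$ against cleverly chosen elements. Since $\|u\|_0=1$ and $P_1,Q$ are orthogonal with respect to \emph{both} inner products, $\|u_1\|_0^2+\|u_\perp\|_0^2=1$; and since $\E_1$ and $\E_1^\bot$ are $a$-orthogonal, $\lambda=a(u,u)=\lambda_1\|u_1\|_0^2+a(u_\perp,u_\perp)$. Eliminating $\|u_1\|_0^2$ gives $a(u_\perp,u_\perp)=(\lambda-\lambda_1)+\lambda_1\|u_\perp\|_0^2\ge\lambda-\lambda_1$. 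I will also record that, by the definition of $\lambda_2$ as the infimum of the Rayleigh quotient on $\E_1^\bot$, one has $\lambda_2\|\chi\|_0^2\le a(\chi,\chi)$ for every $\chi\in\E_1^\bot$.

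Next I would control $Qw$. Testing (\ref{eq2.4}) with $\chi\in\E_1^\bot$ and noting that $P_1u$ and $P_1w$ contribute nothing to either bilinear form against such $\chi$ (by $a$-orthogonality and by $(\cdot,\cdot)$-orthogonality of $\E_1$ and $\E_1^\bot$), one obtains $a(Qw,\chi)=a(u_\perp,\chi)-\lambda(u_\perp,\chi)$ for all $\chi\in\E_1^\bot$. Choosing the particular test element $\chi=u_\perp$ yields $a(Qw,u_\perp)=a(u_\perp,u_\perp)-\lambda\|u_\perp\|_0^2$, and the spectral-gap inequality $\lambda\|u_\perp\|_0^2\le(\lambda/\lambda_2)\,a(u_\perp,u_\perp)$ turns this into $a(Qw,u_\perp)\ge\frac{\lambda_2-\lambda}{\lambda_2}\,a(u_\perp,u_\perp)$, a nonnegative quantity because $\lambda<\lambda_2$. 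Applying the Cauchy--Schwarz inequality for $a(\cdot,\cdot)$ to the left-hand side and dividing by $\|u_\perp\|$ (the case $u_\perp=0$ being trivial, since then $\lambda=\lambda_1$ and the asserted bound reads $\|w\|^2\ge0$) gives $\|Qw\|\ge\frac{\lambda_2-\lambda}{\lambda_2}\|u_\perp\|$.

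Finally, because $P_1w$ and $Qw$ are $a$-orthogonal, $\|w\|^2\ge\|Qw\|^2\ge\left(\frac{\lambda_2-\lambda}{\lambda_2}\right)^2\|u_\perp\|^2=\left(\frac{\lambda_2-\lambda}{\lambda_2}\right)^2 a(u_\perp,u_\perp)$, and substituting the lower bound $a(u_\perp,u_\perp)\ge\lambda-\lambda_1$ from the first step completes the argument. I do not expect a genuine obstacle: the only delicate points are using the hypothesis $\lambda<\lambda_2$ at the right place (it is exactly what keeps $\frac{\lambda_2-\lambda}{\lambda_2}>0$ and makes the Cauchy--Schwarz step effective) and keeping careful track of which inner product each projection and each orthogonality statement refers to. A cosmetically different but equivalent route would write $w=(I-\lambda T)u$ for the bounded solution operator $T$ defined by $a(Tf,\chi)=(f,\chi)$, whose spectrum lies in $\{1/\lambda_1\}\cup[0,1/\lambda_2]$, and then argue via the spectral theorem for $T$; the direct testing argument above, however, avoids spectral calculus and appears shortest.
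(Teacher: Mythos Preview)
Your argument is correct and follows essentially the same structure as the paper: decompose $u=P_1u+Qu$, show $\|w\|\ge\|Qw\|\ge\frac{\lambda_2-\lambda}{\lambda_2}\|Qu\|$, and finish with $\|Qu\|^2\ge\lambda-\lambda_1$. The only difference is in the middle step: the paper actually takes the route you list as the ``cosmetically different'' alternative, introducing the solution operator $G$ with $a(Gf,\chi)=(f,\chi)$, writing $Qw=Qu-\lambda QGu$, proving $\|QGu\|\le\lambda_2^{-1}\|Qu\|$, and then applying the reverse triangle inequality, whereas you obtain the same bound by testing against $u_\perp$ and using Cauchy--Schwarz; both are short and neither requires the spectral theorem.
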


\begin{proof}
For $f\in\HS$, let $Gf\in\HS$ be the solution of 
the equation 
\begin{displaymath}
a(Gf,\chi)=(f,\chi), \quad \chi\in\HS.
\end{displaymath}
The element $w=u-\lambda Gu$ is then the solution 
of the equation (\ref{eq2.4}). Thus 
\begin{displaymath}
\|w\|\geq\|Qu-\lambda QGu\|.
\end{displaymath}
As $Qu$ and $QGu$ are in $\E_1^\bot$ and $Q$ is 
orthogonal with respect to both inner products,
\begin{displaymath}
\|QGu\|^2=(Qu,QGu)\leq\|Qu\|_0\|QGu\|_0\leq
\lambda_2^{-1}\|Qu\|\|QGu\|.
\end{displaymath}
Therefore $\|QGu\|\leq\lambda_2^{-1}\|Qu\|$. 
As $1-\lambda\lambda_2^{-1}>0$, this yields
\begin{displaymath}
\|w\|^2\,\geq\,
\bigg(\frac{\lambda_2-\lambda}{\lambda_2}\bigg)^2\|Qu\|^2.
\end{displaymath}
As $P_1u$ and $Qu$ are orthogonal to each other
and $\|u\|_0=1$,
\begin{displaymath}
\|Qu\|^2=\|u\|^2-\|P_1u\|^2=\lambda-\lambda_1\|P_1u\|_0^2
\,\geq\lambda-\lambda_1,
\end{displaymath}
from which the estimate (\ref{eq3.1}) finally follows.
\qed
\end{proof}

\begin{lemma}       \label{lm3.2}
The distance of $\lambda$ and $\lambda'$ can be estimated 
from below as
\begin{equation}    \label{eq3.2}
\lambda-\lambda'\,\geq\,
\frac{\lambda\,(1-\eta^2)\|w\|^2}{\lambda+(1-\eta^2)\|w\|^2}.
\end{equation}
\end{lemma}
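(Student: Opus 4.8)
The goal is to compare $\lambda=\lambda(u)$ with $\lambda'=\lambda(u')$, where $u'$ is the normalization of $u-v$. The natural starting point is to write out $\lambda'=a(u-v,u-v)/(u-v,u-v)$ and to estimate numerator and denominator separately. I would first replace $v$ by $w$ wherever possible, since $w$ satisfies the convenient orthogonality $a(u,w)=0$ coming from equation~(\ref{eq2.4}), and control the difference $v-w$ through the bound $\|v-w\|\le\eta\|w\|$ from~(\ref{eq2.6}). For the numerator, $a(u-v,u-v)=a(u-w+(w-v),u-w+(w-v))$; expanding and using $a(u,w)=0$ together with $\|w-v\|\le\eta\|w\|$ should give $a(u-v,u-v)\le a(u,u)+\|v-w\|^2\le \lambda + \eta^2\|w\|^2$ after judicious use of Cauchy--Schwarz (recalling $\|u\|_0=1$ so $a(u,u)=\lambda$; note the cross term $2a(u-w,w-v)=2a(-w,w-v)$ has to be absorbed, and one should end up with something like $\le a(u,u)-\|w\|^2+\|w\|^2$ type cancellation, or more directly bound $a(u-v,u-v)$ by $(\,\|u-w\|+\|w-v\|\,)^2$ and then by $\|u\|^2+\|w\|^2 + 2\eta\|u-w\|\|w\| + \ldots$ — the cleanest route is the one that isolates $\|u\|^2=\lambda$ as the dominant term).

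For the denominator I need a lower bound on $\|u-v\|_0^2$. Here the key inequality should be the reverse direction: since $u'$ has Rayleigh quotient $\lambda'$, we have $a(u-v,u-v)=\lambda'\|u-v\|_0^2$, so $\|u-v\|_0^2=a(u-v,u-v)/\lambda'$, and I can instead get a handle on things by estimating $(u-v,u-v)$ from below using that $(u,w)=(u, u-\lambda Gu)$ and that $Gu$ lies in the range of $G$; more robustly, from~(\ref{eq2.5}) and~(\ref{eq2.6}), $\|u-v\|^2\ge(1-\eta)^2\|u-w\|^2=(1-\eta)^2(\|u\|^2+\|w\|^2)$, but I actually want an $L_0$-type lower bound. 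The cleanest identity is probably $(u-v,u-w)$: since $a(u-w,\chi)=\lambda(u,\chi)$ — no, rather $a(w,\chi)=a(u,\chi)-\lambda(u,\chi)$ gives $a(u-w,\chi)=\lambda(u,\chi)$, i.e. $u-w=\lambda Gu$, hence $(u-v,u-v)=\lambda^{-2}a(u-w,u-w)$ plus corrections from $v-w$. Tracking this carefully, $\|u-v\|_0^2\ge \lambda^{-2}\bigl(\text{something like } a(u-w,u-w) - 2\eta\|w\|\,\|u-w\| + \ldots\bigr)$, and $a(u-w,u-w)=a(\lambda Gu,u-w)=\lambda(Gu,u-w)$... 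This is where I expect to spend effort.

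The main obstacle, I anticipate, is the bookkeeping of the perturbation terms: getting all the $\eta$-dependent cross terms to collapse into the single clean factor $(1-\eta^2)$ that appears in~(\ref{eq3.2}), rather than a messier expression with both $\eta$ and $\eta^2$. The trick is surely to use the orthogonality $a(u,w)=0$ at exactly the right moment and to bound $v$ in terms of $w$ only through $\|v-w\|^2\le\eta^2\|w\|^2$ (the squared form), avoiding the appearance of bare $\eta\|w\|$ terms; concretely, one writes $u-v = (u-w) + (w-v)$ and uses $\|(u-w)+(w-v)\|^2 \ge (1-\eta^2)\|u-w\|^2$ is too weak, so instead one keeps $u-v$ intact in the inner products and substitutes the identity $a(u,u-v)=a(u,u-w)-a(u,w-v)=a(u,u-w)$ wait, $a(u,w)\ne 0$ only $a(u,w)=a(u,u)-\lambda=0$ since $\|u\|_0=1$, good, so $a(u,w)=0$ and $a(u,v)=a(u,w)+a(u,v-w)=a(u,v-w)$, giving $|a(u,v)|\le\|u\|\,\eta\|w\|$.

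Once both bounds are in place, I would divide:
\begin{equation*}
\lambda-\lambda'=\lambda-\frac{a(u-v,u-v)}{\|u-v\|_0^2}
\;\ge\;\lambda-\frac{\lambda\,\|u-v\|_0^2 - (1-\eta^2)\|w\|^2}{\|u-v\|_0^2}
=\frac{(1-\eta^2)\|w\|^2}{\|u-v\|_0^2},
\end{equation*}
and then bound $\|u-v\|_0^2$ from above — e.g.\ by $\|u\|_0^2 + \lambda^{-1}\|v\|^2$-type estimate, or directly $(u-v,u-v)\le 1 + (1-\eta^2)\|w\|^2/\lambda$ using that $(u-v,u-v)=(u,u)-2(u,v)+(v,v)$ with $(u,v)=(u,w)+(u,v-w)$ and $(u,w)=\lambda^{-1}a(u,u-w)-$ hmm — this last normalization is exactly the step that produces the denominator $\lambda+(1-\eta^2)\|w\|^2$ in~(\ref{eq3.2}) after multiplying through by $\lambda$. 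So the final line is algebraic rearrangement, and the crux is producing the numerator estimate $a(u-v,u-v)\le\lambda\|u-v\|_0^2-(1-\eta^2)\|w\|^2$ with the clean $(1-\eta^2)$ constant.
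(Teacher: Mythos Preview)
Your numerator bound is essentially right, but your plan for the denominator has a genuine gap, and this is where the paper's argument differs from yours in a crucial way.

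The clean identity underlying everything (which your expansions circle around) is
\[
\lambda-\lambda'\;=\;\frac{\lambda\,\|u-v\|_0^2-a(u-v,u-v)}{\|u-v\|_0^2}
\;=\;\frac{a(w,w)-a(v-w,v-w)+\lambda\,(v,v)}{1-2(u,v)+(v,v)}\,,
\]
obtained by inserting $\chi=v$ into (\ref{eq2.4}) to get $a(u,v)-\lambda(u,v)=a(w,v)$ and then expanding. Using (\ref{eq2.6}) on the numerator and Cauchy--Schwarz with $\|u\|_0=1$ on the denominator gives
\[
\lambda-\lambda'\;\ge\;\frac{(1-\eta^2)\|w\|^2+\lambda\,t^2}{(1+t)^2},\qquad t:=\|v\|_0.
\]
Your version throws away the $\lambda t^2$ term in the numerator and then tries to bound $\|u-v\|_0^2\le 1+(1-\eta^2)\|w\|^2/\lambda$ separately. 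That second inequality is not available: nothing in the hypotheses controls $\|v\|_0$ well enough, since $(1+\|v\|_0)^2$ can exceed $1+(1-\eta^2)\|w\|^2/\lambda$ (for small $\|w\|$ the right side is close to $1$, while $\|v\|_0$ is only bounded by $\lambda_1^{-1/2}(1+\eta)\|w\|$, so the left side is $1+O(\|w\|)$ and dominates).

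The paper's key step, which you are missing, is \emph{not} to bound $t$ but to keep it as a free variable and minimise the ratio $\bigl((1-\eta^2)\|w\|^2+\lambda t^2\bigr)/(1+t)^2$ over $t\ge 0$. The minimum occurs at $\lambda t=(1-\eta^2)\|w\|^2$ and equals exactly the right-hand side of (\ref{eq3.2}). So the $(1-\eta^2)$ factor and the specific denominator $\lambda+(1-\eta^2)\|w\|^2$ both come out of this one-variable minimisation, not from any further estimate on $v$.
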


\begin{proof}
As $(u,u)=1$, $a(u,u)=\lambda$, and because $w$ is a 
solution of equation (\ref{eq2.4}),
\begin{displaymath}
\lambda-\lambda'\,=\,
\frac{a(w,w)-a(v-w,v-w)+\lambda(v,v)}{1-2(u,v)+(v,v)}.
\end{displaymath}
Estimating the denominator with help of the assumption
(\ref{eq2.6}) on the accuracy of $v$ from below and 
the nominator using $\|u\|_0=1$ from above, one obtains 
the estimate
\begin{displaymath}
\lambda-\lambda'\,\geq\,
\frac{(1-\eta^2)\|w\|^2+\lambda\,\|v\|_0^2\,}
{\,1+2\,\|v\|_0+\|v\|_0^2}.
\end{displaymath}
The right hand side becomes, as a function of 
the norm $\|v\|_0$, minimal if
\begin{displaymath}
\lambda\,\|v\|_0=(1-\eta^2)\|w\|^2
\end{displaymath} 
and attains then the value on the right hand 
side of (\ref{eq3.2}).
\qed
\end{proof}

\begin{theorem}     \label{thm3.1}
Under the given assumptions, and if in particular 
$\lambda(u)<\lambda_2$, 
\begin{equation}    \label{eq3.3}
\lambda(u')-\lambda_1\,\leq\,q(\lambda(u))(\lambda(u)-\lambda_1)
\end{equation}
holds, where $q(\lambda)$ is the on the interval 
$\lambda_1\leq\lambda\leq\lambda_2$ strictly
increasing function
\begin{equation}    \label{eq3.4}
q(\lambda)\,=\;1-\,
\frac{(1-\eta^2)\,\lambda\,(\lambda_2-\lambda)^2}
{\lambda_2^2\,\lambda+(1-\eta^2)(\lambda_2-\lambda)^2(\lambda-\lambda_1)}.
\end{equation}
\end{theorem}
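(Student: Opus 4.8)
The plan is to combine Lemmas~\ref{lm3.1} and~\ref{lm3.2} and then express the resulting bound as the contraction factor $q(\lambda)$. I would start from the inequality of Lemma~\ref{lm3.2}, which controls the decrease $\lambda-\lambda'$ in terms of $(1-\eta^2)\|w\|^2$, and subtract $\lambda_1$ on both sides to rewrite it in the form $\lambda'-\lambda_1\leq(\lambda-\lambda_1)-(\lambda-\lambda')$. The key observation is that the right-hand side of \eqref{eq3.2}, viewed as a function of the quantity $s=(1-\eta^2)\|w\|^2$, is monotonically increasing in $s$; hence any lower bound for $s$ yields a lower bound for $\lambda-\lambda'$, and Lemma~\ref{lm3.1} provides exactly such a lower bound, namely $s\geq(1-\eta^2)(\lambda_2-\lambda)^2\lambda_2^{-2}(\lambda-\lambda_1)$.

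Substituting this lower bound for $s$ into the expression $\dfrac{\lambda s}{\lambda+s}$ and then computing $(\lambda-\lambda_1)-\dfrac{\lambda s}{\lambda+s}=(\lambda-\lambda_1)\dfrac{\lambda+s-\lambda s/(\lambda-\lambda_1)}{\lambda+s}$ should, after clearing denominators, produce precisely the factor $q(\lambda)$ multiplying $(\lambda-\lambda_1)$; this is the routine algebra I would not grind through here. One has to be slightly careful that the monotonicity-in-$s$ step is applied only where it is legitimate, i.e. where all quantities involved are nonnegative: this is guaranteed because $\lambda_1\leq\lambda<\lambda_2$ forces $\lambda-\lambda_1\geq 0$ and $\lambda_2-\lambda>0$, and $\|w\|^2\geq 0$ trivially.

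It then remains to verify that $q$, as written in \eqref{eq3.4}, is strictly increasing on $[\lambda_1,\lambda_2]$. I would do this by differentiating $q$ with respect to $\lambda$, or more cleanly by writing $1-q(\lambda)$ as a ratio and checking that this ratio is strictly decreasing: both the behavior of the numerator $(1-\eta^2)\lambda(\lambda_2-\lambda)^2$ and the denominator need to be tracked, but the cleanest route is to divide numerator and denominator of $1-q(\lambda)$ by $\lambda(\lambda_2-\lambda)^2$, reducing the claim to the statement that $\dfrac{\lambda_2^2}{(\lambda_2-\lambda)^2}+\dfrac{(1-\eta^2)(\lambda-\lambda_1)}{\lambda}$ is strictly increasing on $(\lambda_1,\lambda_2)$, which is immediate since each summand is.

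The main obstacle I anticipate is not any single step but rather bookkeeping: making sure the denominator estimate ``from below'' and numerator estimate ``from above'' in Lemma~\ref{lm3.2}, together with the substitution of the $\|w\|^2$-bound from Lemma~\ref{lm3.1}, are all pointing in the same direction, so that the chain of inequalities indeed culminates in \eqref{eq3.3} and not in its reverse. Once the monotonicity of $t\mapsto\dfrac{\lambda t}{\lambda+t}$ is invoked correctly, the remaining work is purely algebraic simplification to recognize the closed form $q(\lambda)$.
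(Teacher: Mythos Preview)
Your proposal is correct and follows essentially the same route as the paper: combine Lemma~\ref{lm3.1} with Lemma~\ref{lm3.2} via the monotonicity of $t\mapsto \lambda t/(\lambda+t)$, then rewrite $\lambda'-\lambda_1=(\lambda-\lambda_1)-(\lambda-\lambda')$ to extract the factor $q(\lambda)$. The one place you differ is in verifying that $q$ is strictly increasing: the paper computes $q'(\lambda)$ explicitly, whereas your trick of dividing numerator and denominator of $1-q(\lambda)$ by $\lambda(\lambda_2-\lambda)^2$ to reduce the claim to the monotonicity of $\lambda_2^2(\lambda_2-\lambda)^{-2}+(1-\eta^2)(\lambda-\lambda_1)\lambda^{-1}$ is cleaner and avoids that computation; just note that this rewriting is only valid on $[\lambda_1,\lambda_2)$, so you should handle the endpoint $\lambda=\lambda_2$ (where $q=1$) and the degenerate case $\lambda=\lambda_1$ (where both sides of \eqref{eq3.3} vanish) separately, as the paper does.
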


\begin{proof}
The function $x\to\lambda x/(\lambda+x)$ is monotonously 
increasing. If one inserts the estimate (\ref{eq3.1}) into 
the estimate (\ref{eq3.2}), one obtains therefore the 
lower bound
\begin{displaymath}
\lambda-\lambda'\,\geq\,
\frac{(1-\eta^2)\,\lambda\,(\lambda_2-\lambda)^2(\lambda-\lambda_1)}
{\lambda_2^2\,\lambda+(1-\eta^2)(\lambda_2-\lambda)^2(\lambda-\lambda_1)}
\end{displaymath}
for the difference $\lambda-\lambda'$ of the two Rayleigh
quotients. If $\lambda>\lambda_1$, the representation
\begin{displaymath}
\lambda'-\lambda_1\,=\,
\bigg(1-\frac{\lambda-\lambda'}{\lambda-\lambda_1}\,\bigg)
(\lambda-\lambda_1)
\end{displaymath}
of $\lambda'-\lambda_1$ thus yields the estimate 
(\ref{eq3.3}). The function $q(\lambda)$ possesses 
the derivative
\begin{displaymath}
q'(\lambda)\,=\;\frac{(1-\eta^2)(\lambda_2-\lambda)
\big((1-\eta^2)\,\lambda_1(\lambda_2-\lambda)^3+2\lambda_2^2\,\lambda^2\big)}
{\big(
\lambda_2^2\,\lambda+(1-\eta^2)(\lambda_2-\lambda)^2(\lambda-\lambda_1)
\big)^2}
\end{displaymath}
and is therefore strictly increasing on the interval 
under consideration. If $\lambda=\lambda_1$ and $u$ is 
therefore an eigenvector for the eigenvalue~$\lambda_1$,
$w=0$ and thus also $v=0$. The iteration comes then to a 
halt, it is $u'=u$ and $\lambda'=\lambda_1$, and 
(\ref{eq3.3}) trivially holds.
\qed
\end{proof}

\begin{lemma}     \label{lm3.3}
The energy norm of the approximation $v$ of the solution 
$w$ of equation {\rm (\ref{eq2.4})} can be estimated in 
terms of the distance of the Rayleigh quotient $\lambda$ 
to $\lambda_1$:
\begin{equation}    \label{eq3.5}
\|v\|^2\,\leq\,
\frac{1+\eta}{1-\eta}\,\frac{\lambda_2}{\lambda_1}\,
(\lambda-\lambda_1).
\end{equation}
\end{lemma}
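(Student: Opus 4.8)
The plan is to bound $\|v\|$ by $\|w\|$ using the triangle inequality together with assumption (\ref{eq2.6}), and then to bound $\|w\|$ from above in terms of $\lambda-\lambda_1$. From $\|v-w\|\le\eta\|w\|$ we get immediately $\|v\|\le(1+\eta)\|w\|$, hence $\|v\|^2\le(1+\eta)^2\|w\|^2$. So the real work is an upper bound for $\|w\|^2$.

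For that I would reuse the machinery from the proof of Lemma~\ref{lm3.1}. With $G$ the solution operator of $a(Gf,\chi)=(f,\chi)$, we have $w=u-\lambda Gu$, and since $P_1u$ is an eigenvector for $\lambda_1$ one has $GP_1u=\lambda_1^{-1}P_1u$, so $Gu=\lambda_1^{-1}P_1u+QGu$ and therefore
\begin{displaymath}
w=(1-\lambda\lambda_1^{-1})P_1u+Qu-\lambda QGu .
\end{displaymath}
The two pieces $(1-\lambda\lambda_1^{-1})P_1u$ and $Qu-\lambda QGu$ lie in the orthogonal subspaces $\E_1$ and $\E_1^\bot$, so $\|w\|^2=(1-\lambda\lambda_1^{-1})^2\|P_1u\|^2+\|Qu-\lambda QGu\|^2$. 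Now $\|P_1u\|^2=\lambda_1\|P_1u\|_0^2\le\lambda_1$ and $1-\lambda\lambda_1^{-1}\le 0$ with $|1-\lambda\lambda_1^{-1}|=(\lambda-\lambda_1)/\lambda_1$, giving a first term bounded by $(\lambda-\lambda_1)^2/\lambda_1$. For the second term, $\|QGu\|\le\lambda_2^{-1}\|Qu\|$ from Lemma~\ref{lm3.1} and $\|Qu\|^2=\lambda-\lambda_1\|P_1u\|_0^2\le\lambda-\lambda_1$, so $\|Qu-\lambda QGu\|\le(1+\lambda\lambda_2^{-1})\|Qu\|$ and that term is bounded by $(1+\lambda\lambda_2^{-1})^2(\lambda-\lambda_1)$. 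Since $\lambda_1\le\lambda\le\lambda_2$, both $(\lambda-\lambda_1)/\lambda_1$ and $(1+\lambda\lambda_2^{-1})^2$ are at most a small multiple of $\lambda_2/\lambda_1$; combining, $\|w\|^2\le C\,\lambda_2\lambda_1^{-1}(\lambda-\lambda_1)$ for an explicit constant $C$.

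The main obstacle will be bookkeeping: getting the constant to collapse exactly to the clean factor $\tfrac{1+\eta}{1-\eta}\tfrac{\lambda_2}{\lambda_1}$ in (\ref{eq3.5}) rather than some looser expression. A cleaner route, which I would actually prefer, is to avoid splitting $w$ at all and instead bound $\|w\|$ directly: since $a(w,w)=a(u,w)-\lambda(u,w)$ and $a(u,w)=0$, we get $\|w\|^2=-\lambda(u,w)$; writing $w=u-\lambda Gu$ gives $(u,w)=\|u\|_0^2-\lambda(u,Gu)=1-\lambda\|G^{1/2}u\|_0^2$, and estimating $(u,Gu)=a(Gu,Gu)=\|Gu\|^2$ from below via the spectral gap ($\|Gu\|^2\ge$ a suitable multiple of $\|Gu\|_0^2\ge$ something involving $\lambda_2^{-1}$) should produce the bound with the right structure. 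Either way, the only genuinely delicate point is that the estimate must stay uniform as $\lambda\to\lambda_2$, and the factor $(\lambda_2-\lambda)$ that caused no trouble in the \emph{lower} bound of Lemma~\ref{lm3.1} must here be controlled from \emph{above} — which is automatic since $\lambda_2-\lambda\le\lambda_2$. I expect the cited form of the constant to emerge after inserting $\|v\|^2\le(1+\eta)^2\|w\|^2$ and simplifying, possibly after first replacing $(1+\eta)^2$ by the weaker but tidier $\tfrac{1+\eta}{1-\eta}$ (valid since $\eta<1$) to match the statement.
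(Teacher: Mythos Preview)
Your proposal contains a genuine error and misses the paper's much simpler route. The claim $\|Qu\|^2=\lambda-\lambda_1\|P_1u\|_0^2\le\lambda-\lambda_1$ has the inequality backwards: since $\|P_1u\|_0^2\le\|u\|_0^2=1$, one gets $\|Qu\|^2\ge\lambda-\lambda_1$, not $\le$. A correct upper bound on $\|Qu\|^2$ requires a \emph{lower} bound on $\|P_1u\|_0^2$ (for instance the one in Theorem~\ref{thm3.2}), and even then the direct decomposition you outline yields an $\eta$-independent constant for $\|w\|^2$ that, after multiplication by $(1+\eta)^2$, will not collapse to the stated $\frac{1+\eta}{1-\eta}\frac{\lambda_2}{\lambda_1}$.

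The paper avoids all of this by simply rearranging Lemma~\ref{lm3.2}. The estimate $\lambda-\lambda'\ge\frac{\lambda(1-\eta^2)\|w\|^2}{\lambda+(1-\eta^2)\|w\|^2}$ is algebraically equivalent to
\[
\|w\|^2\le\frac{1}{1-\eta^2}\,\frac{\lambda}{\lambda'}\,(\lambda-\lambda').
\]
Now $\|v\|^2\le(1+\eta)^2\|w\|^2$ and $\frac{(1+\eta)^2}{1-\eta^2}=\frac{1+\eta}{1-\eta}$, while $\lambda'\ge\lambda_1$, $\lambda\le\lambda_2$, and $\lambda-\lambda'\le\lambda-\lambda_1$ finish the job. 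The constant you were worried about is not obtained by bookkeeping in a spectral decomposition; it falls out because the factor $\frac{1}{1-\eta^2}$ is already built into Lemma~\ref{lm3.2}.
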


\begin{proof}
The estimate from Lemma~\ref{lm3.2} is equivalent to 
\begin{displaymath}
\|w\|^2\,\leq\,\frac{1}{1-\eta^2}\,\frac{\lambda}{\lambda'}\,
(\lambda-\lambda').
\end{displaymath}
As $\|v\|\leq(1+\eta)\|w\|$ and $\lambda_1\leq\lambda'$
and $\lambda\leq\lambda_2$, the inequality (\ref{eq3.5})
follows.
\qed
\end{proof}
If one repeats the process, the Rayleigh quotients 
approach by (\ref{eq3.3}) the eigenvalue~$\lambda_1$. 
At the same time, the norm of the assigned vectors 
$v$ tends by (\ref{eq3.5}) to zero.
\begin{lemma}       \label{lm3.4}
If already $\|v\|^2\leq\lambda_1/4$, the distance of 
the normed version $u'$ of the new vector $u-v$ and 
the given normed $u$ satisfies an estimate
\begin{equation}    \label{eq3.6}
\|u-u'\|^2\,\leq\,
c(\lambda_2/\lambda_1,\eta)(\lambda-\lambda_1),
\end{equation}
with a constant depending only on $\eta$ and the 
ratio $\lambda_2/\lambda_1$
\end{lemma}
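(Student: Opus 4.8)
The plan is to write the difference $u-u'$ explicitly in terms of $v$ and the normalizing constant, and then to estimate each ingredient separately with the tools already at hand. Put $s=\|u-v\|_0$, so that $u'=(u-v)/s$; then
\begin{displaymath}
u-u'\,=\,\frac{su-(u-v)}{s}\,=\,\frac{(s-1)u+v}{s}
\end{displaymath}
and therefore $\|u-u'\|\leq s^{-1}\big(|s-1|\,\|u\|+\|v\|\big)$. It thus remains to bound the three quantities $|s-1|$, $s^{-1}$, and $\|u\|$ in terms of the data of the problem. (If $\lambda=\lambda_1$ then $w=0$, $v=0$, $u'=u$, and the estimate is trivial, so we may assume $\lambda>\lambda_1$.)

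First I would control $|s-1|$ and $s^{-1}$. Since $\|u\|_0=1$, the reverse triangle inequality gives $|s-1|=\big|\,\|u-v\|_0-\|u\|_0\,\big|\leq\|v\|_0$, and since the Rayleigh quotient of $v$ is at least $\lambda_1$, one has $\|v\|_0\leq\lambda_1^{-1/2}\|v\|$. Under the hypothesis $\|v\|^2\leq\lambda_1/4$ this already yields $|s-1|\leq 1/2$, hence $s\geq 1/2$ and $s^{-1}\leq 2$. Moreover $\|u\|^2=a(u,u)=\lambda\leq\lambda_2$. Inserting these bounds,
\begin{displaymath}
\|u-u'\|\,\leq\,2\Big(\lambda_1^{-1/2}\|v\|\,\|u\|+\|v\|\Big)
\,\leq\,2\big(\sqrt{\lambda_2/\lambda_1}+1\big)\,\|v\|.
\end{displaymath}

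Finally I would square this inequality and substitute the bound on $\|v\|^2$ provided by Lemma~\ref{lm3.3}, which gives
\begin{displaymath}
\|u-u'\|^2\,\leq\,4\big(\sqrt{\lambda_2/\lambda_1}+1\big)^2\,
\frac{1+\eta}{1-\eta}\,\frac{\lambda_2}{\lambda_1}\,(\lambda-\lambda_1),
\end{displaymath}
that is, the estimate (\ref{eq3.6}) with $c(\lambda_2/\lambda_1,\eta)=4(\sqrt{\lambda_2/\lambda_1}+1)^2(1+\eta)/(1-\eta)\cdot\lambda_2/\lambda_1$. There is no genuine obstacle in this argument; the only point that needs a little care is the interplay of the two norms, since $u$ and $u'$ are normalized with respect to $\|\cdot\|_0$ while their distance is measured in the energy norm, so the passage between $\|v\|_0$ and $\|v\|$ must be made through the spectral bound $\lambda(v)\geq\lambda_1$ — and it is precisely this passage, together with the assumption $\|v\|^2\leq\lambda_1/4$, that keeps the normalizing factor $s^{-1}$ uniformly bounded, while the smallness of $\lambda-\lambda_1$ is turned into smallness of $\|v\|$ only via Lemma~\ref{lm3.3}.
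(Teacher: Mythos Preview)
Your argument is correct and is essentially the same as the paper's: writing $u-u'=\big((s-1)u+v\big)/s$ with $s=\|u-v\|_0$ is exactly the paper's identity $\|u-u'\|=\|v-(\|u\|_0-\|u-v\|_0)u\|/\|u-v\|_0$, and the subsequent estimates (reverse triangle inequality, $\|v\|_0\leq\lambda_1^{-1/2}\|v\|\leq 1/2$, $\|u\|^2\leq\lambda_2$, and the final appeal to Lemma~\ref{lm3.3}) coincide with the paper's, yielding the same constant.
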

\begin{proof}
Because $u$ has the norm $\|u\|_0=1$, the energy norm 
of $u-u'$ can be written as
\begin{displaymath}
\|u-u'\|\,=\,\frac{\|\,v-(\|u\|_0-\|u-v\|_0)\,u\,\|}{\|u-v\|_0}.
\end{displaymath}
The triangle inequality leads therefore to the 
estimate
\begin{displaymath}
\|u-u'\|\leq\frac{\|v\|+\|v\|_0\|u\|}{1-\|v\|_0}
\end{displaymath}
or, because of 
$\|v\|_0^2\leq\lambda_1^{-1}\|v\|^2\leq 1/4$ and
$\|u\|^2=\lambda(u)$, $\lambda(u)\leq\lambda_2$, to
\begin{displaymath}
\|u-u'\|\,\leq\, 2\;
\bigg(1+\Big(\frac{\lambda_2}{\lambda_1}\Big)^{\!1/2\,}\bigg)\|v\|.
\end{displaymath}
Estimating the norm of $v$ by (\ref{eq3.5}) in terms
of $\lambda(u)-\lambda_1$, the proposition follows.
\qed
\end{proof}

If one starts therefore with a normed initial 
approximation $u=u_0$ in $\HS$ with Rayleigh 
quotient $\lambda(u_0)<\lambda_2$ and generates 
in the manner described a sequence of normed~$u_k$, the
Rayleigh quotients $\lambda(u_k)$ decrease 
strictly to the minimum eigenvalue $\lambda_1$ 
or become stationary there and the estimate
\begin{equation}    \label{eq3.7}
\lambda(u_{k+1})-\lambda_1\,\leq\, 
q(\lambda(u_k))(\lambda(u_k)-\lambda_1)
\end{equation}
holds. As 
$q(\lambda(u_k))\leq q(\lambda(u_0))$,
\begin{equation}    \label{eq3.8}
\lambda(u_k)-\lambda_1\,\leq\, 
q(\lambda(u_0))^k(\lambda(u_0)-\lambda_1),
\end{equation}
so that the Rayleigh quotients converge because of 
$q(\lambda(u_0))<q(\lambda_2)=1$ rapidly to their 
limit. When $k$ goes to infinity, the error 
reduction factors $q(\lambda(u_k))$ fall to 
\begin{equation}    \label{eq3.9}
q(\lambda_1)\,=\,1-\,(1-\eta^2)
\bigg(\frac{\lambda_2-\lambda_1}{\lambda_2}\bigg)^{\!2},
\end{equation}
a value less surprisingly worse than the 
optimal limit value
\begin{equation}    \label{eq3.10}
\bigg(1-(1-\eta)\,\frac{\lambda_2-\lambda_1}{\lambda_2}\bigg)^2
\end{equation}
that Knyazev and Neymeyr \cite{Knyazev-Neymeyr_1}
obtain for the matrix case. For sufficiently large 
$k$, when the norms of the assigned $v_k$ are 
already sufficiently small, by Lemma~\ref{lm3.4} 
\begin{equation}    \label{eq3.11}
\|u_k-u_{k+1}\|^2\,\leq\, c\,(\lambda(u_k)-\lambda_1)
\end{equation}
holds with some constant $c$ that depends only
on $\eta$ and the ratio $\lambda_2/\lambda_1$.
In view of the error estimate (\ref{eq3.8}) for
the Rayleigh quotients, the $u_k$ thus form a 
Cauchy sequence in the Hilbert space $\HS$.
They converge therefore to a limit $u^*$ of 
norm $\|u^*\|_0=1$. 

Our final theorem shows that the distance of 
an arbitrary normed $u$ with Rayleigh quotient less 
than $\lambda_2$ to its best approximation by an 
element in the eigenspace $\E_1$ can be bounded 
in terms of the distance of its Rayleigh quotient 
to the eigenvalue $\lambda_1$.

\begin{theorem}     \label{thm3.2}
For all $u$ of norm $\|u\|_0=1$ with Rayleigh quotient
$\lambda(u)<\lambda_2$,
\begin{equation}    \label{eq3.12}
\|u-P_1u\|^2\,\leq\,
\bigg(\frac{\lambda_2-\lambda_1}{\lambda_2}\bigg)^{\!-1}\!
(\lambda(u)-\lambda_1).
\end{equation}
\end{theorem}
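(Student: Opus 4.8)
The plan is to recognize that $u-P_1u=Qu$ and to carry out the same two-norm Pythagorean bookkeeping that already underlies Lemma~\ref{lm3.1}. First I would express $\|Qu\|^2$ through the energy norm: since $P_1$ and $Q=I-P_1$ are $a$-orthogonal and $\|u\|_0=1$, one has $\|u\|^2=\|P_1u\|^2+\|Qu\|^2$ with $\|u\|^2=a(u,u)=\lambda(u)$, while $P_1u\in\E_1$ gives $\|P_1u\|^2=a(P_1u,P_1u)=\lambda_1\|P_1u\|_0^2$. Hence
\begin{displaymath}
\|Qu\|^2=\lambda(u)-\lambda_1\|P_1u\|_0^2.
\end{displaymath}

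Next I would bound $\|P_1u\|_0^2$ from below. The same splitting is orthogonal for the inner product $(\cdot,\cdot)$, so $\|P_1u\|_0^2=1-\|Qu\|_0^2$, and since $\lambda_2$ is the infimum of the Rayleigh quotient on $\E_1^\bot$, every $v\in\E_1^\bot$ satisfies $\|v\|_0^2\le\lambda_2^{-1}\|v\|^2$; in particular $\|Qu\|_0^2\le\lambda_2^{-1}\|Qu\|^2$. Therefore $\|P_1u\|_0^2\ge 1-\lambda_2^{-1}\|Qu\|^2$.

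Inserting this into the displayed identity gives $\|Qu\|^2\le\lambda(u)-\lambda_1+\lambda_1\lambda_2^{-1}\|Qu\|^2$, that is, $(1-\lambda_1/\lambda_2)\,\|Qu\|^2\le\lambda(u)-\lambda_1$; dividing by the positive factor $1-\lambda_1/\lambda_2=(\lambda_2-\lambda_1)/\lambda_2$ yields the asserted estimate \eqref{eq3.12}. I expect no genuine obstacle here: the argument is elementary once one uses that $Q$ is orthogonal with respect to both inner products, and that $1-\lambda_1/\lambda_2>0$ because $\lambda_1$ is an isolated eigenvalue below the rest of the spectrum. The only point requiring mild care is not to conflate $\|\cdot\|$ and $\|\cdot\|_0$ when applying the Pythagorean identity in each of the two inner products.
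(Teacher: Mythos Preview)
Your argument is correct and is essentially the same as the paper's: both use the double orthogonality of the splitting $u=P_1u+Qu$, the identities $\|u\|^2=\lambda(u)$, $\|P_1u\|^2=\lambda_1\|P_1u\|_0^2$, $\|P_1u\|_0^2+\|Qu\|_0^2=1$, and the inequality $\|Qu\|^2\ge\lambda_2\|Qu\|_0^2$. The only difference is cosmetic algebra---the paper first isolates the lower bound $\|P_1u\|_0^2\ge(\lambda_2-\lambda)/(\lambda_2-\lambda_1)$ and then inserts it into $\|Qu\|^2=\lambda-\lambda_1\|P_1u\|_0^2$, whereas you substitute $\|P_1u\|_0^2\ge1-\lambda_2^{-1}\|Qu\|^2$ directly and solve for $\|Qu\|^2$; these are equivalent rearrangements of the same inequalities.
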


\begin{proof}
Let again $\lambda=\lambda(u)$ for abbreviation.
The proof is based on the relation
\begin{displaymath}
0\,=\,\|u\|^2-\lambda\,\|u\|_0^2\,=\,
\|P_1u\|^2-\lambda\,\|P_1u\|_0^2+\|Qu\|^2-\lambda\,\|Qu\|_0^2.
\end{displaymath}
Using $\|P_1u\|^2=\lambda_1\|P_1u\|_0^2$,
$\|Qu\|^2\geq\lambda_2\|Qu\|_0^2$,
and moreover that
\begin{displaymath}
\|Qu\|_0^2=1-\|P_1u\|_0^2,
\end{displaymath}
one obtains from this relation the lower estimate
\begin{displaymath}
\|P_1u\|_0^2\,\geq\,
\frac{\lambda_2-\lambda}{\,\lambda_2-\lambda_1}
\end{displaymath}
for $P_1u$. 
Since $\|u-P_1u\|^2=\lambda-\lambda_1\|P_1u\|_0^2$, 
this proves the estimate (\ref{eq3.12}).
\qed
\end{proof}
This shows in particular that $u^*=P_1u^*$, so that 
the in the course of the iteration generated vectors 
$u_k$ converge indeed to an eigenvector for the 
minimum eigenvalue.




\begin{thebibliography}{1}

\providecommand{\url}[1]{{#1}}
\providecommand{\urlprefix}{URL }
\expandafter\ifx\csname urlstyle\endcsname\relax
  \providecommand{\doi}[1]{DOI~\discretionary{}{}{}#1}\else
  \providecommand{\doi}{DOI~\discretionary{}{}{}\begingroup
  \urlstyle{rm}\Url}\fi

\bibitem{Dyakonov-Orekhov}
D'yakonov, E., Orekhov, M.: Minimization of the computational labor 
  in determining the first eigenvalues of differential operators.
\newblock Mat. Zametki \textbf{27}, 795--812 (1980).
\newblock In {R}ussian, {E}nglish translation: Math. Notes 27 (1980), 
 pp. 382--391

\bibitem{Knyazev}
Knyazev, A.V.: Toward the optimal preconditioned eigensolver: 
  locally optimal block preconditioned conjugate gradient method.
\newblock SIAM J. Sci. Comput. \textbf{23}, 517--541 (2001)

\bibitem{Knyazev-Neymeyr_1}
Knyazev, A.V., Neymeyr, K.: A geometric theory for preconditioned 
  inverse iteration. {III}. {A} short and sharp convergence 
  estimate for generalized eigenvalue problems.
\newblock Linear Algebra Appl. \textbf{358}, 95--114 (2003)

\bibitem{Knyazev-Neymeyr_2}
Knyazev, A.V., Neymeyr, K.: Gradient flow approach to geometric 
  convergence analysis of preconditioned eigensolvers.
\newblock SIAM J. Matrix Anal. Appl. \textbf{31}, 621--628 (2009)

\bibitem{Neymeyr_1}
Neymeyr, K.: A geometric theory for preconditioned inverse 
  iteration. {I}. {E}xtrema of the {R}ayleigh quotient.
\newblock Linear Algebra Appl. \textbf{322}, 61--85 (2001)

\bibitem{Neymeyr_2}
Neymeyr, K.: A geometric theory for preconditioned inverse 
  iteration. {II}. {C}onvergence estimates.
\newblock Linear Algebra Appl. \textbf{322}, 87--104 (2001)

\bibitem{Neymeyr_3}
Neymeyr, K.: A geometric theory for preconditioned inverse 
  iteration. {IV}. {O}n the fastest convergence cases.
\newblock Linear Algebra Appl. \textbf{415}, 114--139 (2006)

\bibitem{Rohwedder-Schneider-Zeiser}
Rohwedder, T., Schneider, R., Zeiser, A.: Perturbed 
  preconditioned inverse iteration for operator 
  eigenvalue problems with applications to adaptiv
  wavelet discretization.
\newblock Adv. Comput. Math. \textbf{34}, 43--66 (2011)

\end{thebibliography}
\end{document}